\numberwithin{equation}{section}
\newtheorem{lemma}{Lemma}
\theoremstyle{remark}
\def\be{\begin{equation}}
\def\ee{\end{equation}}
\begin{document}

\title[Prescribing the binary digits of the primes, II]
{Prescribing the binary digits of the primes, II}
\date{\today}
\author{Jean Bourgain}
\address{Institute for Advanced Study, Princeton, NJ 08540}
\email{bourgain@math.ias.edu}
\thanks{This work was partially supported by NSF grants DMS-1301619 and DMS-0835373}

\begin{abstract}
We obtain the expected asymptotic formula for the number of primes
$p<N=2^n$ with $r$ prescribed (arbitrary placed) binary digits,
provided $r<{cn}$ for a suitable constant $c>0$.
This result improves on our earlier result where $r$ was assumed to
satisfy $r<c\big(\frac n{\log n}\big)^{4/7}$.
\end{abstract}

\maketitle

\section
{\bf Summary}

This paper is a follow up on \cite{B1}.
We establish the following stronger statement.

\medskip

\noindent{\bf Theorem.}
{\sl Let $N=2^n$, $n$ large enough, and $A\subset\{1, \ldots, n-1\}$ such that
\be\label{0.1}
r=|A|<cn
\ee
($c$ a suitable constant).
Then, considering binary expansions \hfill\break
$x=\sum_{j<n} x_j 2^j$ ($x_0=1$ and $x_j =0, 1$ for $1\leq j<n$) and assignments
$\alpha_j$ for $j\in A$, we have
\be\label{0.2}
|\{ p< N; \text { for } \ j\in A, \text { the $j$-digit of $p$ equals $\alpha_j$}\}|\sim 2^{-r} \frac N{\log N}.
\ee}

In \cite {B1}, the corresponding result was proven under the more restrictive condition
\be\label{0.3}
r< c\Big(\frac n{\log n}\Big)^{4/7}.
\ee
We also refer the reader to \cite{B1}, \cite{B2} for some background and motivation.
In particular the paper of Harman and Katai \cite{H-K} and complexity issues for the Moebius function and the primes,
raised by G.~Kalai, were seminal to those investigations.

The strategy followed here is roughly similar to the one in \cite{B1}, except for the fact that the additive Fourier
spectrum (together with Vinogradov's estimate) is only used to bound the contribution of the minor arcs.
In the treatment of the major arcs, we switch immediately to multiplicative characters (see \eqref{2.8} below)
and are led to study correlations of both the von Mangoldt function $\Lambda$ and the given function 
$f= 1_{[x<N; x_j=\alpha_j\text { for } j\in A]}$ with multiplicative characters.
This issue for $\Lambda$ is classical and depends on Dirichlet $L$-function theory.
We rely here on the same basic facts that were used in \cite{B1}.
As in \cite {B1}, we subdivide primitive characters $\mathcal X$ into two classes $\mathcal G$ and $\mathcal B$
(`good' and `bad') depending on the zero-free region of $L(s, \mathcal X)$.
It turns out that non-trivial bounds on the multiplicative spectrum of $f$ are only required if $\mathcal X\in\mathcal B$
(which is a small set of characters).
Here, we are again invoking the Gallagher-Iwaniec estimate on the improved zero-free region of $L(s, \mathcal X)$ for
$\mathcal X(\text{mod\,} q)$ with $q$ a power of 2, though the precise quantitative form of $[I]$ (which was responsible for the condition
\eqref{0.3}) is no longer relevant here.
Basically any statement that for $q$ as above, $L(s, \mathcal X)$ has a zero-free region $1-\sigma < c\frac {\log \log qT}
{\log q T}$, $|\gamma|<T$, where $\rho =\sigma+i\gamma$, would suffice for our purpose.
This fact ensures then that no character $\mathcal X\in\mathcal B$ has conductor which is a power of 2, which is essential to 
our analysis.
Note that possible Siegel zero's in any case forces us to introduce the class $\mathcal B$, even if $r$ were further reduced.
As in \cite {B1}, one needs to evaluate sums of the form
\be\label{0.4}
\sum_{x\in I, q_0|x} f(x)
\ee
\and
\be\label{0.5}
\sum_{x\in I, q_0|x} \mathcal X(x) f(x)
\ee
with $I\subset\{1, \ldots, N\}$ intervals of a certain size and $\mathcal X\in\mathcal B$, $\mathcal X(\text{mod\,} q)$, $(q, q_0)~=~1$.
The main technical innovation compared with \cite {B1} is a more efficient strategy to estimate these sums, leading to the
required information under less restrictive hypothesis on $r$.

The above theorem may be seen as a relative of Linnik's result on the least prime in an arithmetic progression.
One key difference is that possible Siegel zero's do not affect the final statement (though they play technically a
role in the argument).

Our presentation is completely self-contained, apart from basic number theoretic results and we will not refer to \cite {B1}.

\section
{\bf Preliminaries}

For $x\in \{1, 2, \ldots, 2^n-1\}$, write $x=\sum_{0\leq j<n} x_j 2^j$ with $x_j=0, 1$.

Let
$$
f(x) = 1_{[x<2^n; x_j=\alpha_j \text { for } j\in A]} \text { and } N=2^n
$$
where
$$
A= \{0= j_0< j_1< \ldots < j_r\}\subset \{0, 1, \ldots, n-1\}.
$$
We assume
\be\label {1.1}
r+1= |A|=\rho n
\ee
with $\rho>0$ bounded by a sufficiently small constant $c>0$.

For $\lambda \in \mathbb R$, denote
$$
\hat f(\lambda)= 2^{-n} \sum_{x=0}^{2^n-1} e^{2\pi i \lambda x} f(x) =2^{-|A|}\prod_{j\in A} 
e^{2\pi i\lambda \alpha_j 2^j}
\prod_{\substack{1\leq j<n\\ j\not\in A}} \frac {1+e^{i\pi \lambda 2^{j+1}}}2.
$$
Thus
\be\label {1.2}
|\hat f(\lambda)|= 2^{-|A|} \prod_{\substack{1\leq j<n\\j\not\in A}} |\cos \pi \lambda 2^j|.
\ee

\begin{lemma}\label{Lemma1}
\be\label{1.3}
2^{r+1} \sum_{k=0}^{2^n-1} \Big|\hat f\Big(\frac k{2^n}\Big)\Big| < 2^{C\rho(\log \frac 1\rho)n}
\ee
for some constant $C$.
\end{lemma}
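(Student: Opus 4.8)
The plan is to cancel the prefactor, pass to a purely trigonometric sum, and estimate it by a block--telescoping argument. By \eqref{1.2} the factor $2^{r+1}=2^{|A|}$ cancels the $2^{-|A|}$, so $2^{r+1}|\hat f(k/2^n)|=\prod_{1\le j<n,\ j\notin A}|\cos(\pi k/2^{\,n-j})|=\prod_{m\in M}|\cos(\pi k/2^{m})|$, where $M=\{1,\dots,n-1\}\setminus\{n-j:\ j\in A,\ j\ge1\}$ has cardinality $n-1-r$; so \eqref{1.3} is equivalent to $\Sigma:=\sum_{k=0}^{2^n-1}\prod_{m\in M}|\cos(\pi k/2^{m})|\le 2^{C\rho(\log1/\rho)n}$. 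Write $M$ as the disjoint union of its maximal blocks of consecutive integers $J_1,\dots,J_t$; since $M$ is an interval with $r$ points deleted, $t\le r+1$, and $\sum_{s=1}^t\ell_s=n-1-r$ where $\ell_s=|J_s|$. For a block $J_s=\{a_s+1,\dots,b_s\}$, the classical identity $\prod_{i=1}^{N}\cos(x/2^i)=\sin x/(2^N\sin(x/2^N))$ (with $x=\pi k/2^{a_s}$, $N=\ell_s=b_s-a_s$) gives
$$R_s(k):=\prod_{m\in J_s}\Big|\cos\frac{\pi k}{2^m}\Big|=\frac{|\sin(\pi k/2^{a_s})|}{2^{\ell_s}\,|\sin(\pi k/2^{b_s})|},$$
where the removable singularities at $2^{b_s}\mid k$ are assigned the true (finite) value of the product. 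Hence $\Sigma=2^{\,n}\,\mathbb E_k\big[\prod_{s=1}^{t}R_s(k)\big]$, the average being over $k$ modulo $2^{n}$.

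The main point is the bound $\mathbb E_k\big[\prod_{s=1}^t R_s\big]\le\prod_{s=1}^t C(\ell_s+1)2^{-\ell_s}$, proved by peeling blocks off the top. Since $R_s$ depends only on $k\bmod 2^{b_s}$ and $b_1<\dots<b_t$, the function $R_1\cdots R_{t-1}$ depends only on $k\bmod 2^{b_{t-1}}$; hence it suffices to prove $\mathbb E[R_t\mid k\bmod 2^{b_{t-1}}]\le C(\ell_t+1)2^{-\ell_t}$ uniformly in $k\bmod 2^{b_{t-1}}$, since then $\mathbb E_k[\prod_1^t R_s]\le C(\ell_t+1)2^{-\ell_t}\,\mathbb E_k[\prod_1^{t-1}R_s]$ and the same reasoning applies to $R_{t-1}$, and so on. Fixing $k\bmod 2^{b_{t-1}}$ and averaging over $k\bmod 2^{b_t}$, put $w=\lfloor(k\bmod2^{b_t})/2^{a_t}\rfloor\in\{0,\dots,2^{\ell_t}-1\}$ and $\delta=(k\bmod2^{a_t})/2^{a_t}\in[0,1)$, so $|\sin(\pi k/2^{a_t})|=|\sin\pi\delta|$ and $|\sin(\pi k/2^{b_t})|=|\sin(\pi(w+\delta)/2^{\ell_t})|$; carrying out the average over $w$ and noting that $\delta$ is the only remaining parameter, one is reduced to the elementary inequality
$$\frac{|\sin\pi\delta|}{2^{2\ell}}\sum_{w=0}^{2^{\ell}-1}\frac{1}{|\sin(\pi(w+\delta)/2^{\ell})|}\ \le\ \frac{C(\ell+1)}{2^{\ell}}\qquad(\ell\ge1,\ \delta\in[0,1)).$$

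For $\delta=0$ the left side equals $2^{-\ell}$. For $\delta\in(0,1)$, the terms $w\in\{0,2^{\ell}-1\}$ have denominators within $O(2^{-\ell})$ of zero and are individually of size $O(2^{\ell})$, but the numerator $|\sin\pi\delta|$ cancels this blow-up via Jordan's inequality $|\sin x|\ge\frac2\pi|x|$ ($|x|\le\pi/2$), so they contribute only $O(2^{-\ell})$; the remaining $w$ give $\sum1/|\sin(\pi(w+\delta)/2^{\ell})|$ over $2^{\ell}$ equispaced points of $[2^{-\ell},1-2^{-\ell}]$, which is $O(\ell\,2^{\ell})$ by the harmonic-sum bound $\sum_{1\le w\le 2^{\ell-1}}1/w=O(\ell)$. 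This trigonometric estimate — controlling the near-singular contributions uniformly in $\delta$ — is the step that requires the most care, and it is exactly here that the block--telescoping is essential: a naive bit-by-bit estimate of a block would only give the useless $(1/\sqrt2)^{\ell_s}$ in place of $(\ell_s+1)2^{-\ell_s}$.

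Granting all this, $\Sigma\le 2^{\,n}\prod_{s=1}^t C(\ell_s+1)2^{-\ell_s}=2^{r+1}C^{\,t}\prod_{s=1}^t(\ell_s+1)$, using $\sum_s\ell_s=n-1-r$. By AM--GM, $\prod_{s=1}^t(\ell_s+1)\le\big((n-1-r+t)/t\big)^{t}$; this is increasing in $t$, so, as $t\le r+1$, it is at most $\big(n/(r+1)\big)^{r+1}=(1/\rho)^{\rho n}=2^{\rho(\log_2 1/\rho)n}$ (using $r+1=\rho n$). Since also $C^{t}\le 2^{(\log_2 C)\rho n}$ and $\log_2(1/\rho)\ge1$ once $\rho$ is below a fixed small constant, the exponent is $\le(2+\log_2 C)\rho(\log_2 1/\rho)n$; this is $C\rho(\log1/\rho)n$ for a suitable absolute constant $C$, which is \eqref{1.3}.
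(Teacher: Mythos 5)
Your proposal is correct and follows essentially the same route as the paper: your block-by-block peeling via conditional expectations is the paper's iterated substitution $k=k'+2^{n-j_1}\ell$, your telescoping sine identity together with the harmonic-sum estimate uniform in $\delta$ is the paper's Dirichlet-kernel bound $\sum_\ell 4/(2^{j_1}\Vert(\ell+\theta)/2^{j_1}\Vert+1)<Cj_1$ uniform in $\theta$, and both yield the intermediate bound $C^{O(r)}\prod_s(j_{s+1}-j_s)$. The only cosmetic difference is the final optimization, where you use AM--GM while the paper uses $u\le\frac1\theta 2^{\theta u}$; both give $2^{C\rho(\log\frac1\rho)n}$.
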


\begin{proof}
By \eqref{1.2}, the left side of \eqref{1.3} equals
\be\label{1.4}
\sum^{2^n-1}_{k=0} \prod_{\substack{1\leq j<n\\ j\not\in A}} \Big|\cos\pi \frac k{2^{n-j}}\Big|.
\ee
Writing $k=k' +2^{n-j_1}\ell, 0\leq k'< 2^{n-j_1}, 0\leq \ell < 2^{j_1}$, we get
$$
\eqref{1.4} =\sum_{k'<2^{n-j_1}} \sum_{\ell< 2^{j_1}} \ \prod^{j_1-1}_{j=1} \Big|\cos \pi \Big(
\frac {k'}{2^{n-j}}+\frac \ell{2^{j_1-j}}\Big)\Big| \prod_{\substack{ j_1< j< n\\ j\not\in A}}
\Big| \cos\pi \frac {k'}{2^{n-j}}\Big|.
$$
Evaluate the inner sum with fixed $k'$ as  
$$
\begin{aligned}
&\sum_{\ell< 2^{j_1}} \prod_{j=1}^{j_1-1} \Big|\cos \pi \Big(\frac{k'}{2^{n-j}}+\frac \ell{2^{j_1-j}}\Big)\Big|\leq\\
&\max_\theta \sum_{\ell<2^{j_1}} \ \prod^{j_1-1}_{j=1} \Big|\cos \pi \frac {2^j(\ell+\theta)}{2^{j_1}}\Big|=\\
&\max_\theta\Big\{\sum_{\ell< 2^{j_1}} 2^{-j_1} \Big|\sum_{x=0}^{2^{j_1}-1} 
e^{2\pi i x2^{-j_1}(\ell+0)} \Big|\Big\}\leq\\
&\max_\theta \Big\{\sum_{\ell< 2^{j_1}} \ \frac 4{2^{j_1}\Vert\frac{\ell+\theta}{2^{j_1}} \Vert+1}\Big\} 
<Cj_1
\end{aligned}
$$
for some constant $C$.
Hence
$$
\eqref{1.4} < C(j_1-j_0) \sum_{k'<2^{n-j_1}} \ \prod_{\substack{j_1<j<n\\ j\not\in A}} \Big|\cos \pi \frac {k'}
{2^{n-j}}\Big|
$$
and we repeat the process with the $k'$-sum, replacing $n$ by $n-j_1$,
$j_1$ by $j_2-j_1$ etc.
It follows that
\be\label{1.5}
\eqref{1.4} < C^r \prod^r_{s=1} (j_{s+1} -j_s)
\ee
where we have set $j_{r+1}=n$.
Since $u\leq \frac 1\theta 2^{\theta u}$ for $u\geq 0, \theta\geq 0$,
$$
\eqref{1.5} <\Big(\frac C\theta\Big)^r 2^{\theta n} =\Big(\Big(\frac C\theta\Big)^\rho 2^\theta\Big)^n< 
2^{C\rho(\log \frac  1\rho)n}
$$
for an appropriate choice of $\theta$, proving \eqref{1.3}.
\end{proof} 
\medskip

\begin{lemma}\label{Lemma2}
\be\label{1.6}
2^r \int_0^1 |\hat f(\theta)|d\theta< 2^{C\rho(\log\frac 1\rho)n-n}.
\ee
\end{lemma}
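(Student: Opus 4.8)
The plan is to deduce \eqref{1.6} from Lemma~\ref{Lemma1} by comparing the integral $\int_0^1|\hat f(\theta)|\,d\theta$ with the discrete sum $\sum_{k=0}^{2^n-1}|\hat f(k/2^n)|$ already controlled by \eqref{1.3}. The mechanism is that $\hat f$ is a trigonometric polynomial all of whose frequencies lie in $\{0,1,\dots,2^n-1\}$: straight from the definition, $\hat f(\theta)=2^{-n}\sum_x f(x)e^{2\pi i\theta x}$, with $x$ running over the integers of $[0,2^n)$ on which $f$ does not vanish. Hence $\hat f$ is recovered \emph{exactly} from its $2^n$ sample values at the nodes $k/2^n$, $0\le k<2^n$, by the Dirichlet interpolation identity
\[
\hat f(\theta)=\frac1{2^n}\sum_{k=0}^{2^n-1}\hat f\Big(\frac k{2^n}\Big)\,D\Big(\theta-\frac k{2^n}\Big),\qquad D(\phi)=\sum_{m=0}^{2^n-1}e^{2\pi i m\phi},
\]
which one verifies at once by evaluating the right-hand side on each $e^{2\pi i x\theta}$, $0\le x<2^n$, and invoking orthogonality of the additive characters mod $2^n$.

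Taking absolute values, integrating over $\theta\in[0,1]$, and using translation invariance yields
\[
\int_0^1|\hat f(\theta)|\,d\theta\ \le\ \frac{L_n}{2^n}\sum_{k=0}^{2^n-1}\Big|\hat f\Big(\frac k{2^n}\Big)\Big|,\qquad L_n=\int_0^1|D(\phi)|\,d\phi=\int_0^1\Big|\frac{\sin 2^n\pi\phi}{\sin\pi\phi}\Big|\,d\phi .
\]
The Lebesgue constant is $O(n)$: split $[0,1)$ into the $2^n$ intervals of length $2^{-n}$, bound $|D(\phi)|$ by $2^n$ on the two extreme intervals and by $(2\Vert\phi\Vert)^{-1}$ on the rest, and sum the resulting harmonic series to get $L_n<Cn$. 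Multiplying by $2^r$ and inserting \eqref{1.3},
\[
2^r\int_0^1|\hat f(\theta)|\,d\theta\ \le\ \frac{L_n}{2\cdot 2^n}\Big(2^{r+1}\sum_{k=0}^{2^n-1}\Big|\hat f\Big(\tfrac k{2^n}\Big)\Big|\Big)\ <\ \frac{L_n}{2}\;2^{C\rho(\log\frac1\rho)n-n}.
\]
It remains only to absorb $L_n=O(n)$. Since $r+1=\rho n\ge1$ forces $\rho\ge 1/n$, and $\rho\mapsto\rho\log\frac1\rho$ is increasing on $(0,1/e)$ (and $c$ may be assumed $<1/e$), one has $\rho(\log\frac1\rho)n\ge\log n$ throughout the admissible range; thus $\log L_n=O(\log n)$ is dominated by a fixed multiple of $\rho(\log\frac1\rho)n$, and $L_n$ disappears into an enlargement of the constant in the exponent. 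This gives \eqref{1.6}.

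The only substantive issue is the sampling scale. Since $\hat f$ genuinely oscillates at frequency $\sim 2^n$, one cannot bound $\sup_{[k/2^n,(k+1)/2^n)}|\hat f|$ by $|\hat f(k/2^n)|$ up to a bounded factor: a node may fall near a zero of $\hat f$ while $|\hat f|$ is of order $2^{-r}$ nearby. Using all $2^n$ nodes simultaneously through the interpolation kernel circumvents this, at the sole cost of the logarithmic Lebesgue constant; the remainder is routine bookkeeping.
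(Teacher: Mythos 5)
Your proposal is correct and follows essentially the same route as the paper: interpolate $\hat f$ from its values at the nodes $k/2^n$ via the Dirichlet kernel, bound the integral by $2^{-n}\Vert D\Vert_1\sum_k|\hat f(k/2^n)|$, apply Lemma~\ref{Lemma1}, and absorb the Lebesgue constant $O(n)$ into the constant $C$ in the exponent. The only difference is that you spell out explicitly why the factor $n$ can be absorbed (via $\rho\ge 1/n$), which the paper leaves implicit.
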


\begin{proof} {}\hfill\break

Since $\hat f(\theta)$ is a trigonometric polynomial with spectrum in \hbox{$\{0, 1, \ldots, 2^n-1\}$,}
$$
\hat f(\theta) =2^{-n} \sum_{k=0}^{2^n-1} \hat f\Big(\frac k{2^n}\Big) D\Big(\theta -\frac k{2^n}\Big)
$$
with $D(\theta ) =\sum^{2^n-1}_{k=0} e^{2\pi i k\theta}$ the Dirichlet kernel.
It follows from Lemma 1 that
$$
2^r \int_0^1 |\hat f(\theta )|d\theta \leq \Vert D\Vert_1 \, 2^{C\rho(\log \frac 1\rho)n-n}\lesssim
n2^{C\rho(\log \frac 1\rho) n-n}
$$
proving \eqref{1.6}.
\end{proof}
\medskip

\begin{lemma}\label{Lemma3}
Let $Q< 2^{n/100}$. Then
\be\label {1.7}
2^r \sum_{\substack {q<Q, q\text{ odd}\\ 1\leq a<q, (a, q)=1}} |\hat f\Big(\frac aq\Big)\Big|< Q^{C\rho\log \frac 1\rho}.
\ee
\end{lemma}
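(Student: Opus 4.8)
The plan is to reduce $2^r|\hat f(a/q)|$ to the cosine product $\frac12\prod_{1\le j<n,\ j\notin A}|\cos(\pi a2^j/q)|$ furnished by \eqref{1.2}, and then to exploit the periodicity of $2^j\bmod q$. Since $q$ is odd, $2^j\bmod q$ is periodic in $j$ with period $d=\mathrm{ord}_q(2)\le q-1<Q$; writing $P_b=\{1\le j<n:\ j\equiv b\ (\mathrm{mod}\ d)\}$ and grouping the factors by residue class, one gets $\prod_{j\notin A}|\cos(\pi a2^j/q)|=\prod_{c\in O_a}|\cos(\pi c/q)|^{m_c}$, where $O_a$ is the orbit of $a$ under doubling mod $q$ (of size $d$ when $(a,q)=1$), $m_c=M_c-A_c$ with $M_c=|P_{b(c)}|$ and $A_c=|A\cap P_{b(c)}|$, so that $\sum_c M_c=n-1$ and $\sum_c A_c=r$.

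The engine of the estimate is the identity $\prod_{c\in O_a}|\cos(\pi c/q)|=2^{-d}$, which I would derive from $\prod_{c\in O_a}\bigl(1+\omega^{c}\bigr)=1$ with $\omega=e^{2\pi i/q}$: because $O_a$ is closed under $c\mapsto 2c$ one has $\prod_{c\in O_a}(1+\omega^{c})=\prod_{c\in O_a}(1+\omega^{2c})$, and since $\prod_{c\in O_a}(1-\omega^{c})=\prod_{c\in O_a}(1-\omega^{2c})=\prod_{c\in O_a}(1-\omega^{c})(1+\omega^{c})$ with $\prod_{c\in O_a}(1-\omega^{c})\ne0$, the remaining factor must be $1$; taking absolute values and using $2\cos(\pi c/q)=e^{-i\pi c/q}(1+\omega^{c})$ gives the claim. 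Hence, were the excised set $A$ not to disturb the balance of the $m_c$ (i.e. were all $m_c\approx(n-1-r)/d$), one would get $\prod_{c\in O_a}|\cos(\pi c/q)|^{m_c}\approx 2^{-(n-1-r)}$, so that $2^r|\hat f(a/q)|\approx 2^{-(n-r)}$ and even the trivial count $\#\{(a,q):q<Q\}\le Q^2<2^{n/50}$ would give a sum tending to $0$, far below $Q^{C\rho\log(1/\rho)}$.

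The substance of the proof is to control the imbalance that an adversarial $A$ can create. The inputs I would use are: (i) $|\{c\bmod q:\ |\cos(\pi c/q)|<t\}|\le Cqt$, so that the genuinely small factors of the orbit product are few and, after sorting, the $i$-th smallest is $\gtrsim i/q$; combined with a fractional–knapsack estimate for $\max\sum_c A_c\log(1/|\cos(\pi c/q)|)$ subject to $\sum A_c=r$, $A_c\le M_c$, this bounds $\prod_c|\cos(\pi c/q)|^{m_c}$ by $2^{-(n-1-r)}$ times a "defect" factor controlled in terms of $\rho,q,d$. (ii) A split according to the size of $q$: for $q$ below a bound depending only on $\rho$ the suppression $2^{-(n-1-r)}$ dominates the defect and that part of the sum is $o(1)$; for $Q$ nearly as large as $2^{n/100}$ one can instead write $\hat f(a/q)=2^{-n}\sum_k\hat f(k/2^n)D(a/q-k/2^n)$ (Dirichlet kernel $D$), use the $\gtrsim Q^{-2}$–separation of the Farey points and $Q^2<2^n$ to get $\sum_{q<Q,a}|D(a/q-k/2^n)|\lesssim n2^n$, and conclude via Lemma~\ref{Lemma1} that $2^r\sum_{q<Q,a}|\hat f(a/q)|\lesssim n\,2^{C\rho\log(1/\rho)n}\le Q^{C'\rho\log(1/\rho)}$. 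For the remaining intermediate range of $q$, one must still lean on (i) together with a count of how few $q<Q$ have small multiplicative order of $2$ (such $q$ divide $2^d-1$ for small $d$) and on the rigidity of the alignment condition described below.

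The main obstacle is precisely this alignment phenomenon: the missing‑digit set $A$ can be arranged so that it exactly contains the residue classes $P_b$ on which $|\cos(\pi a2^j/q)|$ is small — for instance $q=2^k-1$ with $A$ avoiding the positions $j\equiv-1\ (\mathrm{mod}\ k)$ — in which case $\prod_{j\notin A}|\cos(\pi a2^j/q)|$ is no longer exponentially small and $|\hat f(a/q)|$ reaches the order of the trivial maximum $2^{-|A|}$. The heart of the matter is to show that the number of pairs $(a,q)$ with $q<Q$ exhibiting such near‑extremal behaviour is at most $Q^{O(\rho\log(1/\rho))}$ — that is, that alignment with $A$ is a rare and rigid constraint — and it is here that I expect the arithmetic of $2^d-1$ and the interval structure of the complement of $A$ (already used in Lemma~\ref{Lemma1}) to enter decisively.
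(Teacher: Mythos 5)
Your reduction to the cosine product via \eqref{1.2} and the orbit identity $\prod_{c\in O_a}|\cos(\pi c/q)|=2^{-|O_a|}$ are correct, but the argument has a genuine gap at exactly the point you yourself flag as the heart of the matter: you never prove that the pairs $(a,q)$, $q<Q$, for which $A$ aligns with the small cosine factors of the doubling orbit contribute at most $Q^{O(\rho\log\frac1\rho)}$ in total. Step (i) only bounds an individual term by $2^{-(n-1-r)}$ times a defect factor, and for aligned configurations that defect wipes out the entire saving, as your own $q=2^k-1$ example shows; the needed counting of near-extremal pairs is deferred to a hope about the arithmetic of $2^d-1$, and the ``intermediate range'' of $q$ is explicitly left open. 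The fallback (ii) via Lemma \ref{Lemma1} and the Dirichlet kernel does not close this, because Lemma \ref{Lemma1} gives a bound of size $2^{C\rho(\log\frac1\rho)n}$, which is $\le Q^{C'\rho\log\frac1\rho}$ only when $\log Q\gtrsim n$; the lemma, however, is applied in \eqref{4.2} to every dyadic range $Q<B$, including $Q$ as small as a fixed power of $n$, where the exponent $C\rho\log\frac1\rho$ (beating $Q^{-1/2}$ after division by $q$) is essential and your bound gives nothing.

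The paper sidesteps the alignment issue entirely by working at scale $\log Q$ instead of $n$: pick $m$ with $2^{m-1}\le Q^2<2^m$ and, by pigeonhole, a block $I=\{j_*,\dots,j_*+m-1\}$ with $r'=|A\cap I|<2\rho m$ as in \eqref{1.8}; discard all cosine factors outside $I$ (each is at most $1$), so that $2^r|\hat f(\frac aq)|\le 2^{r'}|\hat g(\frac{a2^{j_*}}q)|$ with $g$ the corresponding digit function on $\{0,\dots,2^m-1\}$. Since $q$ is odd, the points $\frac{a2^{j_*}}q$ are pairwise $2^{-m}$-separated, and Lemma \ref{Lemma2} at scale $m$ combined with Bernstein's inequality converts the $L^1$ bound \eqref{1.9} into $2^{r'}\sum_\xi|\hat g(\xi)|\lesssim 2^{C\rho(\log\frac1\rho)m}<Q^{2C\rho\log\frac1\rho}$, uniformly in how $A$ sits relative to the orbits of $2$ modulo $q$. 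To rescue your route you would have to supply precisely the quantitative rarity statement for aligned $(a,q)$ that is missing; the windowing argument is the efficient substitute for it.
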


\begin{proof}
Take $m$ such that
$$
2^{m-1} \leq Q^2<2^m.
$$
Clearly there is $0<j_*<n-2m$ so that the interval $I=\{j_*, \ldots, j_*+m-1\}$ satisfies
\be\label{1.8}
r'=|A'|=|A\cap I|< 2\rho m.
\ee
We note that
$$
2^r \Big|\hat f\Big(\frac aq\Big)\Big|\leq \prod_{j\in I\backslash A'} \Big|\cos \frac{\pi a2^j}q\Big|= 2^{r'} \Big|\hat g\Big(
\frac{a2^{j_*}}q\Big)\Big|
$$
where
$$
g= 1_{[x<2^m; x_j=\alpha_{j+j_*}\text { for } j\in A'-j_*]}.
$$
It follows then from \eqref{1.6},\eqref{1.8} that
\be\label{1.9}
2^{r'}\int_0^1 |\hat g(\theta)|d\theta< 2^{C\rho(\log \frac 1\rho)m-m}.
\ee
The set of points $\mathcal F=\Big\{\frac{a2^{j_*}} q (\text{mod\,} 1); q<Q, q \text { odd}, (a, q)=1\}$ are clearly pairwise $2^{-m}$-separated.
Write for $\xi\in \mathcal F$
$$
|\hat g(\xi)| \leq 2^m\int_{|\theta-\xi|< 2^{-m-1}} |\hat g(\theta)|d\theta+ 2^m \int_{|\theta-\xi|< 2^{-m-1}} |\hat g(\theta)
-\hat g(\xi)|d\theta
$$
and
$$
\begin{aligned}
\sum_{\xi\in\mathcal F} |\hat g(\xi)| &\leq 2^m \int_0^1 |\hat g|+\int_0^1 |(\hat g)'|\\
2^{r'} \sum_{\xi\in\mathcal F} |\hat g(\xi)| &\lesssim 2^{m+r'}\int_0^1 |\hat g|<2^{C\rho(\log \frac 1\rho)m}< Q^{2C\rho\log \frac 1\rho}
\end{aligned}
$$
where we used Bernstein's inequality and \eqref {1.9}.
This proves \eqref{1.7}
\end{proof}

For small $q$, there is the following individual bond.

\begin{lemma}\label{Lemma4}
Let $1<q<n^{\frac 1{10_\rho}}$ and odd, $(a, q)=1$.
Then
\be\label{1.10}
2^r \Big|\hat f\Big(\frac aq\Big)\Big| < 2^{-\sqrt n}.
\ee
\end{lemma}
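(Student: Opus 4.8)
The goal is an individual bound on $2^r|\hat f(a/q)|$ for $q$ small (subpolynomial in $n$) and odd. By formula \eqref{1.2},
$$
2^r\Big|\hat f\Big(\frac aq\Big)\Big| = \prod_{\substack{1\le j<n\\ j\notin A}} \Big|\cos \pi \frac{a\,2^j}{q}\Big|,
$$
so everything reduces to understanding $|\cos(\pi a 2^j/q)|$ as $j$ varies. The plan is to exploit the fact that $q$ is odd, so $2$ is invertible mod $q$ and the sequence $2^j \bmod q$ is periodic with some period $d = \mathrm{ord}_q(2) \le q-1$. Since $(a,q)=1$, the residues $a 2^j \bmod q$ cycle through a fixed set of $d$ nonzero residues mod $q$ as $j$ runs over a full period. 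The key observation is that among these $d$ residues, at least one — in fact a positive proportion — is far from $0$ and far from $q/2$ (i.e. $\|a2^j/q\|$ is bounded away from $0$ and from $1/2$), because the multiplicative shift by $2$ cannot pin the whole orbit near the two "bad" points $0$ and $1/2$ where $|\cos \pi t|$ is close to $1$.

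First I would make this quantitative: for each residue $b$ with $1 \le b < q$ one has $|\cos(\pi b/q)| \le 1 - c/q^2$ unless $b$ is within $O(1)$ of $0$ or of $q/2$; and crucially, at most a bounded number of the $d$ residues in the orbit $\{a2^j \bmod q\}$ can lie within any fixed window near $0$ or $q/2$ — indeed, if $a2^{j}$ and $a2^{j'}$ are both $\equiv$ small residues, subtracting gives $a2^{j'}(2^{j-j'}-1)\equiv$ small, forcing $2^{j-j'}\equiv 1$ or constraining $j-j'$ strongly, so the "good" indices within each period have density bounded below. Thus over each block of $d\le q$ consecutive values of $j$, a definite fraction $\ge c_0 > 0$ of the indices $j$ contribute a factor $\le 1 - c/q^2$. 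Since the interval $\{1,\dots,n-1\}$ contains $\asymp n/d \ge n/q$ full periods, and $|A| = r+1 = \rho n$ can kill at most $\rho n$ of these good factors, the number of surviving good factors is at least $(c_0 - \rho)\cdot n/q \cdot$ (something) $\gtrsim n/q$ once $\rho$ is small enough. Therefore
$$
\prod_{\substack{1\le j<n\\ j\notin A}} \Big|\cos \pi \frac{a2^j}{q}\Big| \le \Big(1 - \frac{c}{q^2}\Big)^{\,\gtrsim\, n/q} \le \exp\Big(-c'\,\frac{n}{q^3}\Big),
$$
and with $q < n^{1/(10\rho)}$ — so that $q^3 = o(\sqrt n)$ after accounting for the constants, using that $1/\rho$ is large — this is $< 2^{-\sqrt n}$.

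I would organize the write-up as: (i) reduce to the cosine product via \eqref{1.2}; (ii) the combinatorial lemma that the orbit $\{a2^j \bmod q : 0 \le j < d\}$ has only $O(1)$ elements in the union of the two "bad" windows around $0$ and $q/2$ (using invertibility of $2$ mod $q$); (iii) conclude that a positive proportion of $j \in \{1,\dots,n-1\}$, even after removing $A$, give a factor $\le 1 - cq^{-2}$, using $\rho$ small; (iv) exponentiate and check the numerics against $q < n^{1/(10\rho)}$.

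The main obstacle is step (ii): controlling how many elements of the multiplicative orbit of $2$ can cluster near the two bad points. The naive count of "good" indices could in principle be small if $2$ had tiny order mod $q$ (e.g. $q \mid 2^k-1$ for small $k$), but even then the orbit is a small fixed set not concentrated at $0$ or $q/2$ — one must check that $q/2 \notin \mathbb{Z}$ since $q$ is odd, so $q/2$ is genuinely not a residue and the window around it contains at most $O(1)$ residues total, handled crudely. The clean way is: the number of $j$ in a period with $\|a2^j/q\| < \delta$ is the number of orbit points in an interval of length $2\delta q$, and since distinct orbit points differ by at least $1$, this is $\le 2\delta q + 1$; taking $\delta$ a small constant times $1/q$ (say $\delta = 1/(4q)$) bounds it by a constant, and the same near $q/2$. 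That makes step (ii) essentially immediate, and the rest is bookkeeping with the constant $c$ from the Theorem chosen small relative to the $10$ in the exponent.
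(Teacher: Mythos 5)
There is a genuine gap, and it is in the numerics at the very end: your method only yields a per-digit gain of order $q^{-2}$, and that is far too weak for the actual range of $q$ in the lemma. Note first that your orbit/period machinery is not even needed for what you prove: since $(a,q)=1$ and $q$ is odd, $a2^j\not\equiv 0\pmod q$ for every $j$, so $\|a2^j/q\|\ge 1/q$ automatically and every factor satisfies $|\cos(\pi a2^j/q)|\le 1-c/q^2$ (also, $t$ near $1/2$ is not a bad point — there $|\cos\pi t|$ is close to $0$, which helps you; the only bad region is $\|t\|$ small). So the best your argument can give is $(1-c/q^2)^{(1-\rho)n}\le \exp(-c'n/q^2)$. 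For this to beat $2^{-\sqrt n}$ you need $q\ll n^{1/4}$. But the hypothesis is $q<n^{1/(10\rho)}$ with $\rho$ a \emph{small} constant, so $1/(10\rho)$ is a \emph{large} exponent and $q$ may be a large power of $n$; your parenthetical ``$q^3=o(\sqrt n)\ldots$ using that $1/\rho$ is large'' has this backwards. Indeed the whole point of Lemma \ref{Lemma4} in Section 5 is to cover the range $Q<n^{1/(10\rho)}$ where the dyadic bound $Q^{C\rho\log\frac1\rho-1}$ from Lemma \ref{Lemma3} is not yet small; if the lemma only covered $q\ll n^{1/6}$ it would not suffice there.

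The missing idea is how the paper gets a much stronger gain per digit when $\rho$ is small. One partitions $\{1,\dots,n\}$ into blocks of length $\ell=[\log_2 q]+1$; in a typical block the set $A$ has density at most $2\rho$, i.e.\ at most $2\rho\ell$ excluded indices. Within such a block one exploits the doubling $\|2^{j+1}\theta\|=2\|2^j\theta\|$ (valid while the norm stays below $1/2$): starting from some $j_*\notin E$ with $\|2^{j_*}a'/q\|\approx 2^{-\ell'}$, the norms escalate geometrically, and because $E$ occupies only a $2\rho$ fraction of the block one can pick an index $j\notin E$ near the top of the escalation, obtaining $\|2^ja'/q\|\gtrsim 2^{-2\rho\ell}\gtrsim q^{-2\rho}$ — a \emph{tiny negative power} of $q$, not $1/q$. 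This gives a factor $1-cq^{-4\rho}$ per block of length $\sim\log q$, hence a total bound $\exp\bigl(-cn/(q^{4\rho}\log q)\bigr)$; since $q<n^{1/(10\rho)}$ forces $q^{4\rho}<n^{2/5}$, the exponent is $\gg\sqrt n$ throughout the full range. Without some device of this kind (extracting a denominator-independent-strength gain $q^{-O(\rho)}$ rather than $q^{-2}$ from each short block of digits), the stated range of $q$ cannot be reached.
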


\begin{proof}
Clearly
\be\label{1.11}
2^r \Big|\hat f\Big(\frac aq\Big)\Big| =\prod_{\substack{1\leq j<n\\ j\not\in A}} \Big|\cos \pi 2^j\frac aq\Big|
\leq \gamma^{\frac 12 \frac n\ell} \ \text { with } \ \ell =[^2 \log q]+1
\ee
where $\gamma$ is an upper bound on
\be\label{1.12}
\prod_{\substack{0\leq j<\ell\\ j\not\in E}} \Big|\cos \pi 2^j\frac{a'}q\Big|
\ee
where $(a', q)=1$ and $E\subset \{0, 1, \ldots, \ell-1\}$ satisfies $|E|< 2\rho\ell$.

Take $0\leq j_* \leq 2\rho\ell$ such that $j_*\not\in E$ and set $2^{-\ell'-1} \leq \Big\Vert 2^{j_*}\frac {a'}q\Big\Vert 
< 2^{-\ell'}, \hfill\break 0\leq \ell'\leq \ell$.

Then
$$
\Big\Vert 2^j\frac{a'}q\Big\Vert \sim 2^{j-j_*-\ell'} \text { for } 0\leq j-j_* < \ell'-1.
$$
If $\ell' > 10+2\rho\ell$, we can find $j$ such that $\ell'-2\rho\ell \leq j-j_*< \ell'-1$ and $j\not\in E$.

Then $\big\Vert 2^j \frac{a'} q\big\Vert\gtrsim 2^{-2\rho\ell}$.
Hence, in either case we find some $0\leq j<\ell$, $j\not\in E$, such that $\Vert 2^j\frac{a'}q\Vert\gtrsim 2^{-2\rho\ell}\gtrsim
q^{-2\rho}$.
It follows that
$$
\eqref{1.12} \leq \Big|\cos \pi 2^j\frac {a'}q\Big| < 1-\frac 12\Big\Vert\frac {2^j a'}q\Big\Vert^2 < 1-cq^{-4\rho}.
$$
Substituting in \eqref{1.11} gives the bound $e^{-\frac{cn}{q^{4\rho} \log q}}$ and the Lemma follows.
\end{proof}

\section
{\bf Minor Arcs contribution}

Let $N=2^n$.
Write
\be\label{2.1}
\sum_{k\leq N} \Lambda(k) f(k)= \int^1_0 S(\alpha) \overline S_f (\alpha) d\alpha
\ee
denoting
\be\label{2.2}
S(\alpha)= \sum\Lambda(k) e(k\alpha) 
\ee
and
\be\label{2.3}
S_f(\alpha)= \sum f(k) e(k\alpha)= N\hat f(\alpha).
\ee
We assume $f(k)=0$ for $k$ even, since obviously $k\equiv 1 (\text{mod\,} 2)$ is a necessary condition for $f$ to capture primes.

\medskip

We fix a parameter $B=B(n)$ which will be specified later, $B$ at most a small power of $N$.

The major arcs are defined by
\be\label{2.4}
\mathcal M(q, a)= \Big[\Big|\alpha -\frac aq\Big|<\frac B{qN}\Big] \ \text { where } \  q< B.
\ee
Given $\alpha$, there is $q<\frac NB$ such that
$$
\Big|\alpha-\frac aq\Big| <\frac B{qN}<\frac 1{q^2}.
$$

From Vinogradov's estimate  (Theorem 13.6 in [I-K])
\begin{align}\label {2.5}
|S(\alpha)| &< \Big(q^{\frac 12} N^{\frac 12}+q^{-\frac 12}N+N^{\frac 45}\Big)(\log N)^3\nonumber\\
&\ll \Big(\frac N{\sqrt B}+\frac N{\sqrt q}+N^{4/5}\Big) (\log N)^3.
\end{align}
Hence if $q\geq B$,
\be\label{2.6}
|S(\alpha)|\ll \frac N{\sqrt B} (\log N)^3.
\ee
Thus the minor arcs contribution in \eqref{2.1} is at most
\be\label{2.7}
\ll\frac N{\sqrt B}(\log N)^3 \Vert S_f\Vert_1.
\ee
Since by (2.16)
\be\label{2.8}
\Vert S_f\Vert_1 < 2^{-r} N^{C\rho\log \frac 1\rho}
\ee
we take
\be\label {2.9}
\log B> 3C\rho \Big(\log \frac 1\rho\Big)n
\ee
which takes care of the minor arcs contribution.

\medskip

\section
{\bf Major arcs analysis}

Next, we analyze the major arcs contributions $(q< B)$
\be\label{3.1}
\sum_{(a, q)=1} \ \int\limits_{|\alpha-\frac aq|<\frac B{qN}} S(\alpha) \overline S_f(\alpha) d\alpha.
\ee
Write $\alpha =\frac aq +\beta$. Defining
$$
\tau(\chi) =\sum^q_{m=1}  \chi(m) e_q(m)
$$
we have (see [D], p. 147)
\be\label {3.2}
S(\alpha) =\frac 1{\phi(q)} \sum_{\chi} \tau (\overline{\chi}) \chi(a) \Big[\sum_{k\leq N} \chi(k) \Lambda (k)
e(k\beta)\Big]+O\big((\log N)^2\big)
\ee
Assume $\chi$ is inducted by $\chi_1$ which is primitive $(\text{mod\,} q_1),q_1|q$.
Then from [D], p. 67
\be\label {3.3}
\tau (\bar \chi) =\mu \Big(\frac q{q_1}\Big) \bar\chi_1\big(\frac q{q_1}\Big)\tau (\bar\chi_1)
\ee
which vanishes, unless $q_2 =\frac q{q_1}$ is square free with $(q_1, q_2)=1$.

The contribution of $\chi$ in \eqref{3.1} equals
\be\label{3.4}
\frac{\tau(\overline{\chi})}{\phi (q)} \int\limits_{|\beta|<\frac B{qN}}\Big[\sum_{k\leq N} \chi(k) \Lambda(k) e(k\beta)\Big]
\Big[\sum_{k<N} f(k) \Big(\sum^q_{a=1} \chi(a) e_q(-ak)\Big) e(-k\beta)\Big] d\beta.
\ee
We have
\begin{align}\label{3.5}
\sum^q_{a=1} e_q(ak) \chi(a) &=\sum _{(a, q)=1} e_q(ak) \chi_1(a) \nonumber\\
&=\Bigg[\sum_{(a_1, q_1)=1} e_{q_1} (a_1k) \chi_1 (a_1)\Bigg] \Bigg[\sum_{(a_2, q_2)=1} e_{q_2} (a_2k)\Bigg]\mathcal X_1(q_2)
\nonumber\\
&= \overline{\chi_1}(k) \tau(\chi_1) c_{q_2} (k) \mathcal X_1(q_2) 
\end{align}
where
\be\label{3.6}
c_{q_2}(k) =\frac {\mu \Big(\frac{q_2}{(q_2, k)}\Big) \phi(q_2)}{\phi\Big(\frac{q_2}{(q_2, k)}\Big)}.
\ee

From \eqref{3.3}, \eqref {3.5}, \eqref{3.6}
\begin{align}\label {3.7}
\frac {\tau(\overline{\chi})}{\phi(q)}\Big[ \sum^q_{a=1}  \chi(a) e_q(-ak)\Big] &=\frac {|\tau({\chi_1})|^2}{\phi(q_1)} \
\frac {1}{\phi\Big(\frac {q_2}{(q_2, k)}\Big)}\,  \mu\big(( q_2, k)\big) 
\overline{\chi_1}(k)\nonumber\\
&=\frac {q_1}{\phi(q_1)} \ \frac {1}{\phi(\frac{q_2}{(q_2, k)})} \, \mu\big((q_2, k)\big)  \overline{\chi_1}(k)
\end{align}

Returning to \eqref{3.2}, rather than integrating
in $\beta$ over the interval $|\beta|<\frac B{qN}$, we introduce a weight function
$$
w\Big(\frac{qN}{B} \beta\Big)
$$
where $0\leq w\leq 1$ is a smooth bumpfunction on $\mathbb R$ such that $w=1$ on $[-1, 1]$, supp $w\subset [-2, 2]$ and
$$
|\hat w(y)|< C e^{-|y|^{1/2}}.
$$
\big(Note that this operation creates in \eqref{3.1} an error term that is captured by the minor arcs contribution \eqref{2.7}\big).

Hence, substituting \eqref{3.7}, \eqref{3.4}  becomes
\be\label{3.8}
\frac {q_1}{\phi(q_1)}\frac B{qN}  \sum_{k_1, k_2<N} \hat w\Big(\frac B{qN} (k_1-k_2)\Big)\mathcal X(k_1)\Lambda (k_1) 
f(k_2)\frac {\mu
\big((q_2, k_2)\big)}{\phi\big(\frac {q_2}{(q_2, k_2)}\big)} \overline{\mathcal X_1}(k_2)
\ee
and we observe that by our assumption on $\hat w$ the $k_1, k_2$ summation in \eqref{3.8} is restricted to $|k_1-k_2|< \frac {qN}B n^3$, 
up to a negligible error.

We first examine the contribution of the principal characters.

For $\mathcal X=\mathcal X_0, q_1=1$ and \eqref{3.8} becomes
\be\label{3.9}
\frac B{qN} \sum_{k_1, k_2<N} \hat w \Big(\frac B{qN} (k_1-k_2)\Big) \Lambda(k_1) f(k_2) 
\frac {\mu\big(( q, k_2)\big)}{\phi\big(\frac q{(q, k_2)}\big)}.
\ee
Fixing $k_2$, perform the $k_1$-summation in \eqref{3.9}.
Writing
\be\label{3.10}
\psi(x)=x- \sum_{\substack {\zeta(\rho)=0\\ |\gamma|< B^2}} \ \frac{x^\rho}{\rho} +O \Big(\frac x{B^2}(\log x)^2\Big)
\ee
for $x>\frac NB$ and assuming also
\be\label{3.11}
\log B< \frac n{1000}
\ee
partial summation, together with the usual zero-density and zero-free region estimate, give
\be\label{3.12}
\eqref{3.9} =\sum_{k\leq N} f(k) \frac {\mu((q, k))}{\phi (\frac q{(q, k)})}
\ee
\be\label{3.13}
+O\Big\{\Big[ \sum_{k\leq N} \frac {f(k)}{\phi(\frac q{(q, k)})}\Big] \exp \big(-(\log N)^{\frac 12}\big) \Big\}.
\ee

Let $\kappa(q)$ be a function satisfying the following
\smallskip

\noindent
{\bf Assumption A.} {\sl Let $q_0<B$ be odd and square free. Then
\be\label{3.14}
\sum_{q_0|k, k<N} f(k) =\mathbb E[f] \frac N{q_0} +O\Big(\kappa(q_0)  N\mathbb E[f]\Big)
\ee
where $\mathbb E[f]$ denotes the normalized average.}

\medskip

Assuming $q$ square-free (sf)  and  odd, \eqref{3.12} equals
$$
\begin{aligned}
&\sum_{q'|q} \, \frac {\mu(q')}{\phi\big(\frac q{q'})} \Big[\sum_{(q, k)=q'} f(k)\Big]=\\[6pt]
&\sum_{q'|q} \, \frac{\phi(q')}{\phi(q)} \mu(q') \sum_{q''|\frac q{q'}} \mu(q'')\Big[\sum_{\substack {q'q''|k\\ k<N}} f(k)\Big]
\end{aligned}
$$
and substituting \eqref{3.14} we obtain
\be\label{3.15}
N \, \mathbb E [f] \, \sum_{q'|q} \, \frac{\mu(q')}{\phi(q/q')} \, \sum_{q''|\frac q{q'}} \, \frac {\mu(q'')}{q'q''}
\ee
\be\label{3.16}
+O\Big( N\, 2^{-r} \sum_{q'|q} \, \frac {\phi(q')}{\phi(q)} \, \sum_{q''|\frac q{q'}} \,  {\kappa (q'q'')}\Big).
\ee

Next
\begin{align}\label{3.17}
\eqref{3.15} &= N\, \mathbb E[f] \, \sum_{q'|q} \, \frac {\mu(q')}{\phi (q/q')} \, \frac 1{q'} 
\prod_{p|\frac q{q'}} \, \Big(1-\frac 1p\Big)\nonumber \\[6pt]
&=2N\mathbb E[f] \sum_{q'|q} \frac {\mu(q')}{q}\nonumber\\
&=\begin{cases} N\, \mathbb E[f] \text { if }  \ q=1\\ 0 \ \text { otherwise.}\end{cases}
\end{align}

Summing \eqref{3.16} over $q<B$ sf and odd, we have the estimate (setting $q_1=q'q''$)
\begin{align}\label{3.18}
&2^{-r} N \sum_{\substack{q_1< B\\ q_1 \, \text{sf}}} \kappa (q_1) \Big[\sum_{\substack{ q''|q_1, (q_1, q_2)=1\\ q_2< B \, \text{sf}}}
\, \frac 1{\phi(q'')\phi (q_2)}\Big]\nonumber\\
&<2^{-r} N(\log B)^2 \Big[\sum_{\substack{q<B\\ q\, \text{sf,  odd}}}\ \kappa(q)\Big].
\end{align}
For $q$ sf and even, set $q=2q_1$ and note that \eqref{3.12} equals
$$
\sum_{q'|q_1} \ \frac{\mu(q')}{\phi\big(q_1/q')} \ \Big[\sum_{(q_1, k)=q'} f(k)\Big]
$$
and we proceed similarly as above with the same conclusion and $q$ replaced by $q_1$. 

The first factor in \eqref{3.13} contributes for
\begin{align}\label{3.19}
&\sum_{q<B, q\,\text{sf}} \, \sum_{q'|q, q' \text { odd}} \, \frac 1{\phi(\frac q{q'})} \sum_{\substack{k\leq N\\ q'|k}} f(k)
\overset {\eqref{3.14}}\leq \nonumber\\
& N2^{-r} \sum_{\substack {q', q''<B\\ q', q'' \text{ sf}, \, q'\text { odd}}} \ \frac 1{\phi(q'')} \Big(\frac 1{q'}+\kappa (q')\Big)<\nonumber\\
& N2^{-r} \Big[ (\log B)^2 +(\log B) \Big(\sum_{\substack {q<B\\ q \text{\,sf,  odd}}} \kappa(q)\Big)\Big]
\end{align} 

Thus, from the preceding, the contribution of the principal characters equals 
\be\label{3.20}
2\mathbb E[f] N+CN2^{-r} n^2\Big\{\substack{\sum\limits_{\substack{q< B\\ q \, \text{sf,  odd}}}}{\kappa(q)} \Big\} + CN2^{-r} n^2 \exp(-n^{\frac 12}).
\ee
Next, consider non-principal characters, i.e. $q_1>1$.  

\stepcounter{equation}
\stepcounter{equation}

Estimate \eqref{3.8} by
\be\label{3.23}
\frac {q_1}{\phi(q_1)} \ \frac {B^3}N (4.21).(4.22) + O\Big({\frac N{q B^2}}\Big)
\ee
with

$$
(4.21) =\max_{|I|\sim\frac N{B^3}} \Big| \sum_{k\in I} \mathcal X(k)\Lambda (k)\Big|
$$
and
$$
(4.22) = \sum_I \Big|\sum_{k\in I} f(k) \ \frac {\mu((q_2, k))}{\phi(\frac {q_2}{(q_2, k)})} \bar{\mathcal X_1} (k)\Big|
$$
where $I$ runs over a partition in intervals of size $\sim\frac N{B^3}$.

Obviously
$$
(4.22) \leq \sum_{k\leq N} \ \frac {f(k)}{\phi(\frac {q_2}{(q_2, k)})}
$$
and summing over $q_2 <B, q_2$  sf, gives the estimate \eqref{3.19}.

The factor (4.21) is bounded by
\be\label{3.24}
\max_{\frac N{B^2} <x<N} |\psi (x+h, \mathcal X) -\psi (x, \mathcal X)| \text { with } h\sim \frac N{B^3}.
\ee
Choose a parameter $B<T<N^{\frac 1{100}}$ and denote by $N(\alpha, T; \mathcal X)$ the number of zeros of $L(s, \mathcal X)$ such that
$$
\alpha\leq \sigma\leq 1, |t|\leq T \quad (s=\sigma+it).
$$
Then (see \cite{Bom}, Theorem 14)
\be\label{3.25}
N(\alpha) =\sum_{q\leq Q} \quad \sideset{}{^*}\sum_{ \chi(\text{mod\,} q)} \ N(\alpha, T; \chi)\ll (TQ)^{8(1-\alpha)}
\ee
where $\sum^*$ refers to summation over primitive characters.

Let $\chi$ be a non-principal character.
From Proposition 5.25 in [I-K], for $T\leq x$
\be\label{3.26}
\psi (x,  \chi)= - \sum_{\substack {L(\rho,  \chi)=0\\ |\gamma|\leq T}} \ \frac {x^\rho -1}\rho + O\Big(\frac xT(\log x q)^2\Big)
\ee
where $\rho=\beta+ i\gamma$. We denote
$$
\eta =\eta(\chi) =\min (1-\beta)
$$
with min taken over all zero's $\rho$ of $L(s, \chi)$ with $|\gamma|\leq T$.

Taking $T=B^5$, we get from \eqref{3.26} that
\be\label{3.27}
\eqref{3.24} \leq h\sum_{\substack{L(\rho, \mathcal X_1)=0\\ |\gamma| < B^5}} \ \frac 1{x^{1-\beta}} + O\Big(\frac {Nn^2}{B^5}\Big).
\ee
At this point, we fix some $\eta^* =\eta^*(n)$ and subdivide the primitive characters $\mathcal X_1$ in classes $\mathcal G$ and $\mathcal B$
depending on whether $\eta\geq\eta^*$ or $\eta<\eta^*$.

Recall that $q\leq B$.

Summing \eqref{3.27} over $q_1$, $\mathcal X_1(\text{mod\,} q_1)$ primitive and $\mathcal X_1\in\mathcal G$, we obtain from the density 
bound \eqref{3.25}.
\begin{align}\label {3.28}
\sum_{\mathcal X_1\in\mathcal G} \frac 1{x^{1-\beta}} = -2\int_{\frac 12}^{1-\eta_*} \frac 1{x^{1-\alpha}} dN(\alpha) 
&\leq 2 x^{-\frac 12} N\Big(\frac 12\Big)+ 2\log x\int^{\frac 12}_{\eta_*} \Big(\frac {B^{48}}x\Big)^\tau d\tau\nonumber\\
&< \frac {B^{21}}{N^\frac 12} + N^{-\frac 12\eta_*} <O(N^{-\frac 12\eta_*}).
\end{align}
Hence, the contribution to \eqref{3.23} of the $\mathcal X_1\in \mathcal G$ may be estimated by
\begin{align}\label{3.29}
&(\log\log B)\frac {B^3}N \Big[\frac N{B^3} N^{-\frac 12\eta_*}+ \frac {Nn^2}{B^4}\Big]. \eqref{3.19}<\nonumber\\
& n^3(N^{-\frac 12\eta_*}+n^2 B^{-1})\Big(1+ \sum_{q<B} \kappa(q)\Big) 2^{-r}N.
\end{align} 
Next, we consider the contribution of the $\mathcal X_1\in\mathcal B$.
Again from \eqref{3.25}
\be\label{3.30}
|\mathcal B|\ll (TB)^{8\eta_*}\leq B^{48\eta_*}.
\ee
Use the trivial bound $\frac N{B^3}$ on (4.21) for $\mathcal X_1\in\mathcal B$.
We get the following estimate for the $\mathcal B$-contribution to the first term of \eqref{3.23}
\be\label{3.31}
\sum_{\mathcal X_1 \in \mathcal B} \ \sum_{\substack{q_2<B\\ q_2 \text{\,sf}}} (4.22).
\ee
Introduce another parameter $\alpha (q_1, q_0)$ satisfying the condition.

\smallskip
\noindent
{\bf Assumption B.}

{\sl Given $q_0, q_1<B, (q_0, q_1)= 1$ with $q_0$ sf and odd, $\chi_1(\text{mod\,} q_1)$ primitive, $\chi_1\in\mathcal B$
\be\label {3.32}
\Big|\sum_{k\in I, q_0|k} f(k)\chi_1 (k)\Big|< \alpha(q_1, q_0) \Big[\sum_{k\in I} f(k) +|I| 2^{-r}\Big]
\ee
holds, whenever $I\subset [1, N]$ is an interval of size $\sim \frac N{B^3}$.}

\medskip

Hence
$$
\begin{aligned}
(4.22) &= \sum_I \sum_{\substack {q_2'|q_2\\ q_2' \text { odd}}} \ \frac 1{\phi(\frac {q_2}{q_2'})} \Big|\sum_{k\in I, (k, q_2) =q_2'}
f(k) \chi_1 (k)\Big|\\
&\leq N2^{-r} \sum_{\substack{q_2'|q_2\\ q_2' \text { odd}}} \ \frac 1{\phi(\frac {q_2}{q_2'})} \ \sum_{q_2''|\frac {q_2}{q_2'}, q_2'' \text { odd}}
\ \alpha (q_1, q_2' q_2'')
\end{aligned}
$$
and summation over sf $q_2< B$ gives
\begin{align}\label{3.33}
& N2^{-r}\sum_{\substack {q_3<B\\ q_3 \text{\,sf}, \text { odd}}} \alpha (q_1, q_3) 
\sum_{\substack{q_2''|q_3, q_2'''<B\\ q_2''' \text{\, sf}, (q_2''', q_3)=1}} \
\frac 1{\phi (q_2'') \phi (q_2''')}\nonumber\\
&\lesssim n^2 N2^{-r}\Big[\sum_{\substack {q_3<B\\ q_3 \, \text{sf,  odd}}} \ \alpha (q_1, q_3)\Big].
\end{align}

By \eqref{3.30}, this gives the following bound on \eqref{3.31}
\be\label{3.34}
n^2 B^{48 \eta_*} N2^{-r} \max_{\mathcal X\in \mathcal B}\Big[\sum_{q_0<B, q_0 \text{\,sf,  odd}} \ \alpha (q_1, q_0)\Big].
\ee
\medskip

In the next section, we will establish bounds on
\be\label{3.35}
\sum_{\substack {q<B\\ q \, \text{sf,  odd}}} \ \kappa (q)
\ee
and
\be\label{3.36}
\sum_{\substack{ q_0<B\\ q_0 \, \text{sf,  odd}}} \alpha (q_1, q_0).
\ee
In particular, \eqref{3.35} $<O(1)$ so that a choice
\be\label{3.37}
\eta_*=O\Big(\frac {\log n}n\Big)
\ee
suffices for \eqref {3.29} to be conclusive.

It is important to note that for this choice of $\eta_*$, no $\mathcal X_1\in\mathcal B$ has a conductor $q_1$ which is a power of 2.
Indeed, recalling the Gallagher-Iwaniec result (see \cite {H-K}, Lemma 5), if $\mathcal X_1$ is primitive $(\text{mod\,} 2^m)$, we obtain 
the following improved zero-free region
\be\label{3.38}
\eta (\mathcal X_1)\gtrsim [(\log 2^m T)(\log\log 2^m T)]^{-\frac 34} \sim (\log B\log\log B)^{-\frac 34} >\eta_*
\ee
(recall also that Siegel zeros are not a concern).

\section
{\bf Further estimates}

It remains to obtain suitable bounds on $\kappa(q)$ and $\alpha(q_1, q_0)$ introduced in Assumptions A and B.

Write for $q<B$, $q$ sf and odd
$$
\sum_{q|k} f(k) =\frac Nq \mathbb E[f] +\frac 1q \sum_{a=1}^{q-1} \sum_k f(k) e\Big( \frac{ak} q\Big)
$$
and hence
\be\label{4.1}
\kappa (q) \leq \frac{2^r}q \sum_{a=1}^{q-1} \Big|\hat f\Big(\frac aq\Big)\Big|.
\ee
It follows that \eqref{3.35} may be bounded by
\be\label{4.2}
2^r \sum_{\substack{ 1<q<B\\ q \, \text{sf, odd}}} \frac 1q \sum^{q-1}_{a=1} \Big|\hat f\Big(\frac aq\Big)\Big| \leq
\log B\sum_{\substack{ 1<q<B\\ q\text { sf, odd}\\ (a, q)=1}} \frac{2^r}q \Big|\hat f\Big(\frac aq\Big)\Big|.
\ee
Consider dyadic ranges $q\sim Q<B$.
Lemma \ref{Lemma3} provides an estimate $Q^{C\rho(\log \frac 1\rho)-1}$ $< Q^{-\frac 12}$ (for $\rho$ small enough) 
for the corresponding
contribution to the sum in the r.h.s. of \eqref{4.2}, while, for $Q<n^{\frac 1{10\rho}}$, Lemma \ref{Lemma4} gives an estimate
$Q2^{-\sqrt n}$.
It follows that
\be\label{4.3}
\eqref{4.2} < n.n^{-\frac 1{20\rho}}+n^{\frac 1{10\rho}+1} e^{-\sqrt n}< 2. n^{-\frac 1{20\rho}+1}.
\ee
Next, consider Assumption B.
Observe first that we can assume (by subdivision) $I$ to be of the form $[0, 2^m-1]+u2^m$ with $m=[\frac n2]$ say.

Fix $u\in\{0, 1, \ldots, 2^{n-m}-1\}$ such that $u_j=\alpha_{j+m}$ for $j+m\in A$ and define
\be\label{4.4}
f_1(x)=f(x+u2^m) \text { for } x\in \{0, \ldots, 2^m-1\}.
\ee
Thus
$$
f_1 =1_{\big[x< 2^m; x_j=\alpha_j \text { for } j\in A\cap [1, m-1]\big]}.
$$
It clearly suffices to establish inequalities \eqref{3.32} with $f|_I$ replaced by $f_1$, provided $\mathcal X_1(k)$ is replaced by
$\mathcal X_1(k+u2^m)$.
This basically leads to evaluate
\be\label{4.5}
\sum_{k<N, k+b\equiv 0 (\text{mod\,} q_0)} f(k) \mathcal X_1 (k+b)
\ee
without taking the restriction $k\in I$ into consideration.

Let us first assume $q_1>1$ is odd.
Write \eqref{4.5} as
\be\label{4.6}
\frac 1{q_0} \sum_{a_0=0}^{q_0-1} \sum_k f(k) e\Big(\frac{a_0}{q_0}(k+b)\Big) \mathcal X_1 (k+b)=\frac N{q_0} \sum_{a_0=0}^{q_0-1}
\sum_{(a_1, q_1)=1} \hat{\mathcal X_1} (a_1) e\Big(b\Big(\frac{a_0}{q_0}+\frac {a_1}{q_1}\Big)\Big) \hat f\Big(\frac {a_0}{q_0}+
\frac{a_1}{q_1}\Big)
\ee
with
$$
\hat{\mathcal X}_1 (a_1) =\frac 1{q_1} \sum_{x=0}^{q_1-1} \mathcal X_1 (x) e\Big(-\frac {xa_1}q\Big).
$$
Hence
$$
|\eqref{4.6}|\leq \frac N{q_0\sqrt{q_1}} \ \sum_{a_0=0}^{q_0-1} \ \sum_{(a_1, q_1)= 1} |\hat f\Big(\frac{a_0}{q_0}+\frac{a_1}{q_1}\Big)\Big|
$$
and summing over $1\leq q_0 <B, q_0$ sf, odd, $(q_0, q_1)=1$, we obtain a bound
$$
\frac N{\sqrt{q_1}} \log B\sum_{\substack{ 1\leq q_0<B, q_0 \text { sf, odd}, (q_0, q_1)=1\\
(a_0, q_0)=1, (a_1, q_1)=1}} \ \frac 1{q_0} \Big|\hat f\Big(\frac {a_0}{q_0}+\frac {a_1}{q_1}\Big)\Big|
$$
\be\label{4.7}
\leq N. n\ \sum_{\substack {1<q<B^2, q \text { sf, odd}\\ (a, q)=1}} \ \frac 1{\sqrt q} \Big|\hat f\Big(\frac aq\Big)\Big|.
\ee
By a similar estimate as used for \eqref{4.2}, we get for $\rho$ small enough
$$
\eqref{4.3} < n^{-\frac 1{30\rho}+1} N.2^{-r}
$$
and a bound
\be\label{4.8}
\eqref{3.36}< n^{-\frac 1{30\rho}+1}.
\ee 
If $q_1$ is even, write $q_1 =2^\nu q_1'$, $(q_1', 2)=1$ and $q_1' >1$ since $q_1$ is not a power of 2.
Let $\mathcal X_1=\mathcal X_0\mathcal X_1'$ with $\mathcal X_0(\text{mod\,} 2^\nu)$ and $\mathcal X_1'$ primitive $(\text{mod\,} q_1')$.
Write $k=z+2^\nu x$ with $z\in \{0, 1, \ldots, 2^\nu -1\}$, $x< 2^{n-\nu}$ and
\be\label{4.9}
\eqref{4.5} 
=\sum_{z=0}^{2^\nu -1} \mathcal X_0 (b+z) \sum_{\substack{x<2^{n-\nu}\\ b+z+2^\nu x\equiv 0(\text{mod\,} q_0)}}
\mathcal X_1' (b+z+2^\nu x)f_z(x)
\ee
denoting
\be\label{4.10}
f_z(x)= f(z+2^\nu x).
\ee
Thus
\be\label{4.11}
|\eqref{4.9}|\leq \sum_{z=0}^{2^\nu -1} \max_{b'} \Big|\sum_{\substack{x< 2^{n-\nu}\\ x+b' \equiv 0(\text{mod\,} q_0)}}
\mathcal X_1 (b'+x) f_z(x)\Big|.
\ee
Estimate the inner sum in \eqref{4.11} similarly to \eqref{4.5}, with $f$ replaced by $f_z, q_1$ by $q_1', 
N$ by $2^{-\nu} N$.
This gives a bound
\be\label{4.12}
2^{n-\nu}\mathbb E[f_z]. (n-\nu)^{-\frac 1{30\rho}+1}.
\ee
Summation of \eqref{4.12} over $z< 2^\nu$ implies that
$$
\eqref{4.11} \lesssim n^{-\frac 1{30\rho}+1} N2^{-r}
$$
so that \eqref{4.8} holds in general.

Summarizing, we proved that
\be\label{4.13}
\sum_{\substack{q<B\\ q \text { sf, odd}}} \kappa(q) \lesssim n^{-\frac 1{20\rho}+1}
\ee
and also
\be\label{4.14}
\sum_{\substack{q_0<B, q_0 \text { sf, odd}\\ (q_0, q_1)=1}} \alpha(q_1, q_0)\lesssim n^{-\frac 1{30\rho}+1}.
\ee
\bigskip

\section
{Conclusion}

Recalling \eqref{3.20}, \eqref{3.23}, \eqref{3.29}, \eqref{3.34} and inserting the estimates \eqref{4.13}, \eqref{4.14}, we
find that
\begin{align}
\label{6.1}
&\sum _{x<N} \Lambda(x) f(x)= 2\mathbb E[f]N+\nonumber\\
&N\mathbb E[f] O(n^{-\frac 1{20\rho}+3} +n^2 e^{-\sqrt n}+ 2^r B^{-1} +n^3 N^{-\frac 12\eta_*}+n^5B^{-1} +n^{-\frac 1{30\rho}+3}
B^{48\eta_*})
\end{align}

Recall also conditions \eqref{2.9}, \eqref{3.11} on $B$.

It remains to choose $\eta_*\sim \frac{\log n}n$ appropriately and let $\rho$ be small enough to conclude the Theorem.

\end{document}